\def\LaTeX{L\kern -.36em\raise .3ex\hbox{\sc a}\kern -.15em T\kern -.1667em%
\lower .7ex\hbox{E}\kern -.125em X}
\newtheorem{thm}{Theorem}
\newtheorem{cor}{Corollary}
\newtheorem{pr}{Proposition}
\newtheorem{df}{Definition}
\newtheorem{ex}{Example}
\begin{document}
\subjclass{Primary 53D20; Secondary 53C12, 14F40.}
\thanks{Research of the author supported by Institute of Mathematics, Polish Academy of Sciences.}
\author{Wojciech Domitrz}
\address{Warsaw University of Technology,
Faculty of Mathematics and Information Science\\
Plac Politechniki 1, 00-661 Warsaw,
Poland\\
domitrz@mini.pw.edu.pl}
\date{}

\title[Reductions of locally conformal symplectic structures ]{Reductions \\ of locally conformal symplectic structures \\and  de Rham cohomology \\tangent to a foliation.}
\maketitle

\section{Introduction.}

Let $M$ be a smooth even-dimensional manifold, $\dim M=2n>2$. Let
$\Omega$ be a smooth nondegenerate $2$-form on $M$. If there
exists an open cover $\left\{ U_a: a\in A \right\}$ of $M$ and
smooth positive functions $f_a$ on $U_a$ such that
\begin{equation}
\label{def1} \Omega_a=f_a\Omega|_{U_a}
\end{equation}
is a symplectic form on $U_a$ for $a\in A$ then $\Omega$ is called
a {\bf locally conformal symplectic form}.

Equivalently (see \cite{L}) $\Omega$ satisfies the following
condition:
\begin{equation}
\label{def2}
 d\Omega=\omega\wedge \Omega,
\end{equation}
where $\omega$ is a closed $1$-form. $\omega$ is uniquely
determined by $\Omega$ and is called {\bf the Lee form of}
$\Omega$. $(M,\Omega,\omega)$ is called a {\bf locally conformal
symplectic manifold}.

If $\Omega$ satisfies (\ref{def1}) then $\omega|_{U_a}=d(\ln f_a)$
for all $a\in A$. If  $f_a$ is constant for all $a\in A$ then
$\Omega$ is a symplectic form on $M$. The Lee form of the
symplectic form is obviously zero.

Locally conformal symplectic manifolds are generalized phase
spaces of Hamiltonian dynamical systems since the form of the
Hamiltonian equations is preserved by homothetic canonical
transformations \cite{Vaisman}.

Two locally conformal symplectic forms $\Omega_1$ and $\Omega_2$
on $M$ are {\bf conformally equivalent} if $\Omega_2=f \Omega_1$
for some smooth positive function $f$ on $M$. A conformal
equivalence class of locally conformal symplectic forms on $M$ is
a {\bf locally conformal symplectic structure} on $M$ (\cite{Ba}).

Let $Q$ be a smooth submanifold of $M$. Let
$\iota:Q\hookrightarrow M$ denote the standard inclusion. We say
that two locally conformal symplectic forms $\Omega_1$ and
$\Omega_2$ on $M$ are {\bf conformally equivalent on $Q$} if
$\iota^{\ast}\Omega_2=f \iota^{\ast}\Omega_1$ for some smooth
positive function $f$ on $Q$.

Clearly the Lee form of a locally conformal symplectic form is
exact if and only if  $\Omega$ is conformally equivalent to a
symplectic form \cite{Vaisman}. Then the locally conformal
symplectic structure is {\bf globally conformal symplectic}.

Locally conformal symplectic forms were introduced by Lee
\cite{L}. They have been  intensively studied in \cite{Vaisman},
\cite{H-R1}, \cite{H-R2}, \cite{H-R3}, \cite{Ba}.

In \cite{Wojtkowski} the symmetry of the Lyapunov spectrum in
locally conformal Hamiltonian systems is studied. It was shown
that Gaussian isokinetic dynamics, Nos\'{e}-Hoovers dynamics and
other systems can be treated as locally conformal Hamiltonian
systems. A kind of reduction was applied to obtain these results.

In  \cite{H-R3} (see Section 3) a reduction procedure of a locally
conformal symplectic form is defined using the general definition
of reduction (see \cite{MR}). But the conditions for the reduction
of locally conformal symplectic form are very restrictive (see
Proposition 1 in \cite{H-R3} and Proposition \ref{tw-ex} in
Section \ref{form}). There are local obstructions, a locally
conformal symplectic form on a germ of a generic smooth
hypersurface cannot be reduced using this procedure (see Example
\ref{p1}). The procedure of the reduction of a locally conformal
symplectic form has no application for the reduction of systems
with symmetry defined in Section 5 of \cite{H-R3}.

In this paper we show different approach to this problem.  We
propose to reduce a locally conformal symplectic {\em structure}
(the conformal equivalence class of a locally conformal symplectic
form) instead of a locally conformal symplectic form. This
procedure of reduction can be applied to much wider class of
submanifolds. There are no local obstructions for this procedure.
But there are global obstructions. We find a necessary and
sufficient condition when this reduction holds in terms of the
special kind of de Rham cohomology class (tangent to the
characteristic foliation) of the Lee form.

\section{De Rham cohomology tangent to a foliation.}

Let $Q$ be a smooth manifold and let $\mathcal F$ be a foliation
in $Q$. We denote by $\Omega ^p(Q)$ the space of differential
$p$-forms on $Q$. By $\Omega ^p(Q,\mathcal F)$ we denote the space
of $p$-forms $\omega $ satisfying the following condition:
$$\omega|_q
(v_1,\ldots ,v_p)=0$$  for any $q\in Q$ and for any vectors
$v_1,\ldots ,v_p$ tangent to the foliation $\mathcal F$ at $q$. It
means that $\omega \in \Omega ^p(Q,\mathcal F)$ if and only if
$\iota^{\ast}_q\omega =0$ for any $q\in Q$, where $i_q:{\mathcal
F}_q\hookrightarrow Q$ is the standard inclusion  of the leaf
${\mathcal F}_q$ of the foliation ${\mathcal F}$ into $Q$.

$\Omega ^p(Q,\mathcal F)$ is a subcomplex of the de Rham complex
$\Omega^{\ast}(Q)$. It follows from the relation
$\iota^{\ast}_qd\omega =d\iota^{\ast}_q\omega$.

 We define the factor space
$$\Omega^p({\mathcal F})=\Omega ^p(Q)/\Omega ^p(Q,\mathcal F).$$
The operator $d_p:\Omega^p({\mathcal F})\rightarrow \Omega
^{p+1}(\mathcal F),\;d_p(\omega )=d\omega$ is well defined since
$d\Omega ^p(Q,{\mathcal F})\subset \Omega ^{p+1}(Q,{\mathcal F})$.
Therefore one has the following differential complex
$$
(\Omega ^{\ast}({\mathcal F}),d):\Omega ^0({\mathcal
F})\rightarrow ^{d_0}\Omega ^1({\mathcal F})\rightarrow
^{d_1}\Omega ^2({\mathcal F})\rightarrow ^{d_2}\ldots
$$
The cohomology  of this complex
$$
H^p({\mathcal F})=H^p(\Omega^p({\mathcal F}),d)=\mbox{ker}
d_p/\mbox{im} d_{p-1}
$$
is called {\bf de Rham cohomology tangent to foliation} ${\mathcal
F}$ (see \cite{Vaisman2} for a similar construction).

Directly from the definition of the cohomology $H^p({\mathcal F})$
we get the following propositions.
\begin{pr}
$H^p({\mathcal F})=0$ for $p$ greater than the dimension of the
leaves of the foliation $\mathcal F$.
\end{pr}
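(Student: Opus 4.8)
The plan is to prove the stronger assertion that the cochain group $\Omega^p(\mathcal F)$ itself vanishes as soon as $p$ exceeds the leaf dimension $k=\dim\mathcal F$; the vanishing of the cohomology $H^p(\mathcal F)$ then follows at once, since a trivial cochain group has trivial cohomology.

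First I would unwind the definitions. By construction $\Omega^p(\mathcal F)=\Omega^p(Q)/\Omega^p(Q,\mathcal F)$, and a form $\omega\in\Omega^p(Q)$ lies in $\Omega^p(Q,\mathcal F)$ exactly when $\iota^{\ast}_q\omega=0$ for every leaf $\mathcal F_q$, where $\iota_q:\mathcal F_q\hookrightarrow Q$ is the inclusion. Thus it suffices to show that for $p>k$ every $p$-form on $Q$ satisfies this condition, i.e.\ that $\Omega^p(Q,\mathcal F)=\Omega^p(Q)$.

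The key step is a pointwise dimension count. Fixing $q\in Q$ and a point $x$ of the leaf $\mathcal F_q$, the pullback $\iota^{\ast}_q\omega$ is a $p$-form on $\mathcal F_q$, so its value at $x$ lies in $\Lambda^p T^{\ast}_x\mathcal F_q$. Since $\dim\mathcal F_q=k<p$, this exterior power is the zero space, hence $(\iota^{\ast}_q\omega)_x=0$. As $q$ and $x$ were arbitrary, $\iota^{\ast}_q\omega=0$ for every leaf, so $\omega\in\Omega^p(Q,\mathcal F)$. This gives $\Omega^p(Q,\mathcal F)=\Omega^p(Q)$ and therefore $\Omega^p(\mathcal F)=0$. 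Consequently $\mbox{ker}\,d_p\subseteq\Omega^p(\mathcal F)=0$, and $H^p(\mathcal F)=\mbox{ker}\,d_p/\mbox{im}\,d_{p-1}=0$.

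I do not anticipate a genuine obstacle here, as the argument is purely pointwise and linear-algebraic. The only point worth noting is that the leaves of a foliation are in general only immersed submanifolds; however, the pullback of differential forms and the exterior-power computation above are valid for any immersion, so the reasoning is unaffected.
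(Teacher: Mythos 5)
Your proposal is correct and is exactly the argument the paper has in mind: the paper offers no separate proof, stating only that the proposition follows "directly from the definition," and the intended reasoning is precisely your observation that for $p$ greater than the leaf dimension every $p$-form pulls back to zero on each leaf, so $\Omega^p(Q,{\mathcal F})=\Omega^p(Q)$, the cochain group $\Omega^p({\mathcal F})$ vanishes, and hence so does $H^p({\mathcal F})$.
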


\begin{pr}
Let $\omega\in \Omega^p(Q)$ such that $d\omega \in
\Omega^{p+1}(Q,{\mathcal F})$. If $[\omega]=0$ in $H^p({\mathcal
F})$ then $[\iota_q^{\ast}\omega]=0$ in $H^p({\mathcal F}_q)$ for
any $q\in Q$.
\end{pr}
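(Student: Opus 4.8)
The plan is to unwind the hypothesis $[\omega]=0$ in $H^p({\mathcal F})$ at the level of representatives and then pull everything back to a single leaf. Because $d\omega\in\Omega^{p+1}(Q,{\mathcal F})$, the image of $\omega$ in the quotient $\Omega^p({\mathcal F})$ is a cocycle for $d_p$, so its cohomology class is defined. Using the description $H^p({\mathcal F})=\mbox{ker}\,d_p/\mbox{im}\,d_{p-1}$, the vanishing $[\omega]=0$ means exactly that the class of $\omega$ in $\Omega^p({\mathcal F})$ lies in $\mbox{im}\,d_{p-1}$, i.e. it equals $d_{p-1}$ of the class of some $\eta\in\Omega^{p-1}(Q)$. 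Translating this equality of classes back into $\Omega^p(Q)$, and using $d_{p-1}[\eta]=[d\eta]$, it says precisely that $\omega-d\eta\in\Omega^p(Q,{\mathcal F})$ for this $\eta$.

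The second step is to restrict to a leaf. By the very definition of the subcomplex $\Omega^p(Q,{\mathcal F})$, the membership $\omega-d\eta\in\Omega^p(Q,{\mathcal F})$ is equivalent to $\iota_q^{\ast}(\omega-d\eta)=0$ for every $q\in Q$. I would then invoke the naturality relation $\iota_q^{\ast}d=d\,\iota_q^{\ast}$ — the same relation that makes $\Omega^{\ast}(Q,{\mathcal F})$ a subcomplex — to write $\iota_q^{\ast}d\eta=d(\iota_q^{\ast}\eta)$. Combining these gives $\iota_q^{\ast}\omega=d(\iota_q^{\ast}\eta)$ as an identity of $p$-forms on the leaf ${\mathcal F}_q$.

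Finally, since $\iota_q^{\ast}\eta$ is an honest $(p-1)$-form on ${\mathcal F}_q$, the last identity exhibits $\iota_q^{\ast}\omega$ as an exact form on the leaf, so that $[\iota_q^{\ast}\omega]=0$ in the de Rham cohomology $H^p({\mathcal F}_q)$ of the leaf (here ${\mathcal F}_q$ carries its single-leaf foliation, for which the subcomplex is trivial and $H^p$ reduces to ordinary de Rham cohomology). Since $q$ was arbitrary, the conclusion holds for every $q\in Q$.

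This argument is a direct chase through the definitions, so I do not anticipate a genuine obstacle; the proposition is really a formal consequence of the subcomplex structure. The only points demanding care are bookkeeping ones: keeping straight the double quotient (a form on $Q$, its class in $\Omega^p({\mathcal F})$, and the cohomology class in $H^p({\mathcal F})$) so that $[\omega]=0$ is correctly read off as the \emph{ambient} relation $\omega-d\eta\in\Omega^p(Q,{\mathcal F})$, and checking that the restricted primitive $\iota_q^{\ast}\eta$ is the correct witness for exactness on the leaf. Both are immediate once the distinction between a form and its various classes is tracked carefully.
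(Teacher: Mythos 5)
Your proof is correct, and it is precisely the direct definitional argument the paper has in mind: the paper offers no written proof, stating only that the proposition follows ``directly from the definition'' of $H^p(\mathcal F)$, and your chase — reading $[\omega]=0$ as $\omega-d\eta\in\Omega^p(Q,{\mathcal F})$ for some $\eta\in\Omega^{p-1}(Q)$, then pulling back to a leaf to get $\iota_q^{\ast}\omega=d(\iota_q^{\ast}\eta)$ — is exactly that unwinding. Your care with the two quotients (the class in $\Omega^p({\mathcal F})$ versus the class in $H^p({\mathcal F})$) and with why $H^p({\mathcal F}_q)$ is ordinary de Rham cohomology of the leaf fills in the details the paper leaves implicit.
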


We define the special kind of contractions along a foliation.
\begin{df}
We say that $Q$ is contractible to a submanifold $S$ along the
foliation $\mathcal F$ if there exist a family of maps
$F_t:Q\rightarrow Q$, $t\in [0,1]$ which is (piece-wise) smooth in
$t$, such that $F_1$ is the identity map, $F_0(Q)\subset
S,\;F_0|_S=id_S$ and $\forall q\in Q\;F_t({\mathcal F}_q)\subset
{\mathcal F}_q$ for all $t\in [0,1]$. We call the family $F_t$ a
(piece-wise) smooth contraction of $Q$ to $S$ along the foliation
$\mathcal F$.
\end{df}

Using the analog of the homotopy operator for the above
contraction we prove the following theorem.
\begin{thm}\label{contr}
Let $S$ be a smooth submanifold of $Q$ transversal to a foliation
$\mathcal F$. If $Q$ is contractible to $S$ along the foliation
$\mathcal F$ then the cohomology groups $H^p({\mathcal F})$ and
$H^p({\mathcal F}\cap S)$ are isomorphic.
\end{thm}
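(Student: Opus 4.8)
The plan is to mimic the classical de Rham homotopy argument, but carried out on the quotient complex $\Omega^{\ast}(\mathcal F)$ while taking care that every map involved respects the subcomplex $\Omega^{\ast}(Q,\mathcal F)$. Write $\iota:S\hookrightarrow Q$ for the inclusion and $r:=F_0:Q\to S$ (the corestriction is legitimate since $F_0(Q)\subset S$). Since $F_0|_S=\mathrm{id}_S$ we have $r\circ\iota=\mathrm{id}_S$ and $\iota\circ r=F_0$. First I would check that $\iota^{\ast}$ and $r^{\ast}$ descend to the foliated complexes. For $\iota^{\ast}$: transversality of $S$ to $\mathcal F$ makes $\mathcal F\cap S$ a foliation whose leaf through $q\in S$ is a component of $\mathcal F_q\cap S$ with tangent space $T_q\mathcal F_q\cap T_qS$, so a form vanishing on vectors tangent to $\mathcal F$ pulls back to a form vanishing on vectors tangent to $\mathcal F\cap S$, whence $\iota^{\ast}\big(\Omega^p(Q,\mathcal F)\big)\subset\Omega^p(S,\mathcal F\cap S)$. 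For $r^{\ast}$ the point is that $F_0$ sends each leaf into itself, so $dF_0$ carries vectors tangent to $\mathcal F$ into vectors tangent to $\mathcal F$; combined with $dF_0(T_qQ)\subset T_{r(q)}S$ this shows $r^{\ast}\big(\Omega^p(S,\mathcal F\cap S)\big)\subset\Omega^p(Q,\mathcal F)$. Thus both induce chain maps of the quotient complexes, and on $\Omega^{\ast}(\mathcal F\cap S)$ the identity $\iota^{\ast}\circ r^{\ast}=\mathrm{id}$ already holds at the cochain level.

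It remains to show that $r^{\ast}\circ\iota^{\ast}=F_0^{\ast}$ induces the identity on $H^{\ast}(\mathcal F)$. Here I would introduce the homotopy operator associated with $G:Q\times[0,1]\to Q$, $G(q,t)=F_t(q)$, namely
\[
H\omega=\int_0^1\iota_{\partial_t}\big(G^{\ast}\omega\big)\,dt,
\]
defined on each subinterval on which $F_t$ is smooth and then summed, so that the only effect of piecewise smoothness is a harmless telescoping of the boundary terms. The classical identity $F_1^{\ast}\omega-F_0^{\ast}\omega=d(H\omega)+H(d\omega)$ then holds on $\Omega^{\ast}(Q)$, with $F_1^{\ast}=\mathrm{id}$.

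The crux of the proof, and the step I expect to be the main obstacle, is that $H$ maps the subcomplex $\Omega^{\ast}(Q,\mathcal F)$ into itself, so that the whole homotopy descends to $\Omega^{\ast}(\mathcal F)$. The key observation is that, because $F_t(\mathcal F_q)\subset\mathcal F_q$, the curve $t\mapsto F_t(q)$ stays inside the single leaf $\mathcal F_q$, so its velocity $dG(\partial_t)=\tfrac{\partial}{\partial t}F_t(q)$ is tangent to $\mathcal F$; moreover $dF_t$ preserves tangency to $\mathcal F$. Hence, for $\omega\in\Omega^p(Q,\mathcal F)$ and vectors $v_1,\dots,v_{p-1}$ tangent to $\mathcal F$,
\[
\big(\iota_{\partial_t}G^{\ast}\omega\big)(v_1,\dots,v_{p-1})=\omega_{G(q,t)}\big(dG(\partial_t),dF_t v_1,\dots,dF_t v_{p-1}\big)=0,
\]
since all arguments are tangent to $\mathcal F$; integrating in $t$ gives $H\omega\in\Omega^{p-1}(Q,\mathcal F)$. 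The same leaf preservation shows that each $F_t^{\ast}$ preserves $\Omega^{\ast}(Q,\mathcal F)$.

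Consequently the homotopy identity descends to $\Omega^{\ast}(\mathcal F)$, where it reads $\mathrm{id}-\overline{F_0^{\ast}}=d\overline{H}+\overline{H}d$, so $\overline{F_0^{\ast}}=\overline{r^{\ast}}\circ\overline{\iota^{\ast}}$ acts as the identity on $H^{\ast}(\mathcal F)$. Together with $\overline{\iota^{\ast}}\circ\overline{r^{\ast}}=\mathrm{id}$ on $H^{\ast}(\mathcal F\cap S)$, this exhibits $\iota^{\ast}$ and $r^{\ast}$ as mutually inverse isomorphisms, proving $H^p(\mathcal F)\cong H^p(\mathcal F\cap S)$ for every $p$.
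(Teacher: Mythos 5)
Your proof is correct and follows essentially the same route as the paper: both rest on the homotopy operator associated with the contraction $F_t$, with the key point being that leaf-preservation ($F_t({\mathcal F}_q)\subset {\mathcal F}_q$) forces the velocity $\frac{dF_t}{dt}$ and the images $F_{t\ast}v_i$ to be tangent to $\mathcal F$, so the correction terms land in $\Omega^{\ast}(Q,{\mathcal F})$. The only cosmetic difference is that you descend the full homotopy identity to the quotient complex (and explicitly check that $\iota^{\ast}$ and $F_0^{\ast}$ preserve the subcomplexes, which the paper leaves implicit), whereas the paper runs the same computation directly on a representative cocycle.
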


\begin{proof} Since $\mathcal F$ is transversal to S then ${\mathcal
F}\cap S$ is a foliation on $S$. Let $F_t$ be a (piece-wise)
smooth contraction of $Q$ to $S$ along the foliation $\mathcal F$.
Let $\omega$ be a $p$-form on $Q$ such that $d\omega \in
\Omega^{p+1}(Q,{\mathcal F})$ and $i:S\hookrightarrow Q$ be the
standard inclusion of $S$ in $Q$. Then
$$
\omega -(F_0^{\ast }\circ \iota^{\ast})(\omega )=F_1^{\ast }\omega
-F_0^{\ast }\omega = \int _0^1\frac{d}{dt}F_t^{\ast }\omega
dt=\int _0^1F_t^{\ast }({\mathcal L}_{V_t}\omega )dt=
$$
$$
\int _0^1F_t^{\ast }(V_t\rfloor d\omega +d(V_t\rfloor \omega
))dt=\int _0^1[F_t^{\ast }(V_t\rfloor d\omega )+d(F_t^{\ast
}(V_t\rfloor \omega ))]dt
$$
where $V_t\circ F_t=\frac{dF_t}{dt}$ and ${\mathcal L}_{V_t}$ is
Lie derivative along the vector field $V_t$.

For any $q\in Q$ and any $(u_1,\cdots,u_p)$ tangent to ${\mathcal
F}_q$ we have
$$
\int _0^1F_t^{\ast }(V_t\rfloor d\omega )dt(u_1,\cdots,u_p)=\int
_0^1F_t^{\ast }(V_t\rfloor d\omega )(u_1,\ldots ,u_p)dt=
$$
$$
=\int _0^1d\omega (V_t\circ F_t,F_{t_{\ast }}u_1,\ldots
,F_{t_{\ast }}u_p)dt=\int _0^1d\omega (\frac{dF_t}{dt},F_{t_{\ast
}}u_1,\ldots ,F_{t_{\ast }}u_p)dt=0,
$$
since $F_t({\mathcal F}_q)={\mathcal F}_q$ and $d\omega\in
\Omega^{p+1}(Q,{\mathcal F})$. It implies that $\int _0^1F_t^{\ast
}(V_t\rfloor d\omega )dt\in \Omega^p(Q,{\mathcal F})$

Finally we obtain,
$$
\omega -(F_0^{\ast }\circ \iota^{\ast})(\omega )=\beta+d\alpha,
$$
where $\beta \in \Omega^p(Q,{\mathcal F})$ and $\alpha =\int
_0^1F_t^{\ast }(V_t\rfloor \omega )dt$. Thus
$$
[\omega ]=[(F_0^{\ast }\iota^{\ast}(\omega)]\in H^p({\mathcal F})
$$
It implies that $F_0^{\ast }\circ \iota^{\ast}=id_{H^p({\mathcal
F})}$.

On the other hand, $\iota^{\ast}\circ F_0^{\ast
}=id_{H^p({\mathcal F}\cap S)}$, since $F_0\circ \iota=id_S$. Thus
$F_0^{\ast}$ is the required isomorphism between cohomology groups
$H^p({\mathcal F}\cap S)$ and $H^p({\mathcal F})$.\end{proof}

\section{Integrability of characteristic distribution.}

Let $Q$ be a submanifold of a locally conformal symplectic
manifold $(M,\Omega,\omega)$, $\dim M=2n$. Let $\iota:
Q\hookrightarrow M$ denote the standard inclusion of $Q$ in $M$.
Let
$$
(T_qQ)^{\Omega}=\left\{v\in T_qM | \Omega(v,w)=0 \ \forall w\in
T_qQ \right\}.
$$
We assume that $\dim (T_qQ)^{\Omega}\cap T_qQ$ is constant for
every $q\in Q$ . By $(TQ)^{\Omega}\cap TQ$ we denote a
characteristic distribution $\bigcup_{q\in Q} (T_qQ)^{\Omega}\cap
T_qQ $ which is a subbundle of the tangent bundle to $Q$. Now we
prove
\begin{pr}
\label{invol} The characteristic distribution $(TQ)^{\Omega}\cap
TQ$ is involutive.
\end{pr}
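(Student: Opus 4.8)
The plan is to recognize the characteristic distribution as the kernel (radical) of the pulled-back form, and then to run the intrinsic Cartan computation for the exterior derivative exactly as in the symplectic case, checking that the extra term coming from the Lee form does no harm.

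First I would note that a vector $v\in (T_qQ)^{\Omega}\cap T_qQ$ already lies in $T_qQ$, so the condition $\Omega(v,w)=0$ for all $w\in T_qQ$ only involves the restriction of $\Omega$ to $T_qQ$. Writing $\Theta=\iota^{\ast}\Omega$ for the pulled-back $2$-form on $Q$, this identifies the characteristic distribution with the kernel of $\Theta$, namely $(T_qQ)^{\Omega}\cap T_qQ=\{v\in T_qQ:(v\rfloor\Theta)_q=0\}$. By the constant-rank hypothesis this kernel is a subbundle, hence admits local sections; since involutivity is a local condition, it suffices to show that $[X,Y]$ is a section of $\ker\Theta$ whenever $X$ and $Y$ are. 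I would also use here that the bracket of two vector fields tangent to $Q$ is again tangent to $Q$, so $[X,Y]\in TQ$ and the pairing $\Theta([X,Y],Z)$ is meaningful for $Z$ tangent to $Q$.

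Next I would pull back the defining identity (\ref{def2}): because $d\Omega=\omega\wedge\Omega$ and pullback commutes with both $d$ and $\wedge$, one gets $d\Theta=\bar\omega\wedge\Theta$ on $Q$, where $\bar\omega=\iota^{\ast}\omega$. Now take $X,Y$ to be sections of $\ker\Theta$ and let $Z$ be an arbitrary vector field on $Q$, and expand $d\Theta(X,Y,Z)$ in two ways. In the invariant formula for $d$ of a $2$-form, every summand except $-\Theta([X,Y],Z)$ contains a factor $\Theta$ having at least one of its arguments equal to $X$ or $Y$, and hence vanishes; this leaves $d\Theta(X,Y,Z)=-\Theta([X,Y],Z)$. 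On the other hand $(\bar\omega\wedge\Theta)(X,Y,Z)$ expands into three terms, each of which again pairs $\Theta$ with $X$ or $Y$, so it vanishes as well.

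Combining the two evaluations gives $\Theta([X,Y],Z)=0$ for every $Z$, that is, $[X,Y]\in\ker\Theta=(TQ)^{\Omega}\cap TQ$, which is the claimed involutivity. The only point needing care, and the reason the locally conformal case is not entirely automatic, is the presence of the extra term $\bar\omega\wedge\Theta$ in $d\Theta$; but its characteristic feature is that the wedge leaves the whole factor $\Theta$ intact, so that $X,Y\in\ker\Theta$ force it to vanish. I therefore do not expect any genuine obstacle: once the distribution is identified with $\ker(\iota^{\ast}\Omega)$ and the Lee identity is pulled back, the argument reduces to the standard Frobenius-type computation.
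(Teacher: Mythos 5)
Your proof is correct and follows essentially the same route as the paper: both apply the invariant (Cartan) formula for $d$ of a $2$-form together with the Lee identity $d\Omega=\omega\wedge\Omega$, observing that all terms except $-\Omega([X,Y],Z)$ vanish when $X,Y$ lie in the characteristic distribution, while the wedge term vanishes for the same reason. Your reformulation via the kernel of $\iota^{\ast}\Omega$ is just a cleaner packaging of what the paper does implicitly by evaluating $\Omega$ on vector fields tangent to $Q$.
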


\begin{proof} Let $X$, $Y$ be smooth sections of $(TQ)^{\Omega}\cap TQ$.
We show that $[X,Y]$ is also a section of $(TQ)^{\Omega}\cap TQ$.
By the well-known formula, for a smooth vector field $Z$ on $Q$ we
have
\begin{eqnarray*}
&d\Omega(X,Y,Z)&=\\
 & X(\Omega(Y,Z))-Y(\Omega(X,Z))+Z(\Omega(X,Y))+& \\
& +\Omega([X,Z],Y)-\Omega([X,Y],Z)-\Omega([Y,Z],X)& = \\
&-\Omega([X,Y],Z),&
\end{eqnarray*}
because $X, Y$ are smooth sections of $(TQ)^{\Omega}\cap TQ$. On
the other hand $d\Omega=\omega\wedge\Omega$. Therefore
$$
d\Omega(X,Y,Z)=\omega(X)\Omega(Y,Z)+\omega(Y)\Omega(Z,X)+\omega(Z)\Omega(X,Y)=0.
$$
Thus we obtain $\Omega([X,Y],Z)=0$ for every smooth vector field
$Z$ on $Q$. On the other hand $[X,Y]$ is a section of $TQ$, since
$X,Y$ are sections of $TQ$. Therefore $[X,Y]$ is a smooth section
of $(TQ)^{\Omega}\cap TQ$.\end{proof}

\section{Reduction of locally conformal symplectic
forms.}\label{form}

By Frobenius' theorem  and Proposition \ref{invol},
$(TQ)^{\Omega}\cap TQ$ is integrable and defines a foliation
${\mathcal F}$, which is called a characteristic foliation. Let
$N=Q/{\mathcal F}$ be a quotient space obtained by identification
of all points on a leaf. Assume that $N=Q/{\mathcal F}$ is a
smooth manifold and the canonical projection $\pi : Q \rightarrow
N=Q/{\mathcal F}$ is a submersion.  If $\Omega$ is a symplectic
form then there exists a symplectic structure $\tau$ on $N$ such
that
\begin{equation}
\label{war1} \pi^{\ast}\tau=\iota^{\ast}\Omega,
\end{equation}
where $\iota: Q\hookrightarrow M$ denotes the standard inclusion
of $Q$ in $M$ (see \cite{MW}, \cite{A-M}, \cite{Arnold},
\cite{GS}, \cite{MR}, \cite{O-R}, \cite{D-J1} and many others).

In \cite{H-R3} the reduction procedure for locally conformal
symplectic manifolds that satisfies condition (\ref{war1}) is
proposed.

The necessary and sufficient condition for existence of a
conformal symplectic form on the reduced manifold $N=Q/{\mathcal
F}$, which satisfies condition (\ref{war1}) is presented in the
following theorem (see also Section 3 in \cite{H-R3}).

\begin{pr}
\label{tw-ex} Let $Q$ be a submanifold of a locally conformal
symplectic structure $(M,\Omega,\omega)$, let $\iota:
Q\hookrightarrow M$ denote the standard inclusion of $Q$ in $M$
and let $\mathcal F$ be the characteristic foliation of the
characteristic distribution $TQ^{\Omega}\cap TQ$ of constant
dimension. If $N=Q/{\mathcal F}$ is a manifold of dimension
greater than 2 and the canonical projection $\pi: Q \rightarrow
N=Q/{\mathcal F}$ is a submersion then there exists a locally
conformal symplectic form $\tau$ on $N$ such that
$\pi^{\ast}\tau=\iota^{\ast}\Omega$ if and only if
\begin{equation}
\label{war1a} \iota^{\ast}\omega(X)=0
\end{equation} for every smooth section $X$ of $TQ^{\Omega}\cap TQ$.
\end{pr}

 \begin{proof} Assume that there exists a locally conformal symplectic form $\tau$ on $N$
 such that
 $\pi^{\ast}\tau=\iota^{\ast}\Omega$. Then
$$
\pi^{\ast}d\tau=\iota^{\ast}d\Omega=\iota^{\ast}\omega\wedge
\iota^{\ast}\Omega.
$$
Therefore, for every smooth section $X$ of $TQ^{\Omega}\cap TQ$ we
have
$$
X\rfloor (\iota^{\ast} \omega \wedge \iota^{\ast}\Omega)=X\rfloor
(\pi^{\ast}d\tau)=0,
$$
because $\pi_{\ast}(X)=0$. But $\iota^{\ast}\Omega\ne 0$ and
$X\rfloor \iota^{\ast}\Omega=0$ , therefore
$\iota^{\ast}\omega(X)=0$.

Now assume that $\iota^{\ast}\omega(X)=0$ for every smooth section
$X$ of $TQ^{\Omega}\cap TQ$. Then
$$
 X\rfloor d\iota^{\ast}\Omega=X\rfloor (\iota^{\ast} \omega \wedge \iota^{\ast}\Omega)=0.
$$
Hence
$$
L_X\iota^{\ast}\Omega=X\rfloor (d\iota^{\ast}\Omega)+d(X\rfloor
\iota^{\ast}\Omega)=0
$$
for every smooth section $X$ of $TQ^{\Omega}\cap TQ$. Therefore
$\Omega$ is constant on every leaf of the characteristic foliation
${\mathcal F}$. Now we define the form $\tau$ by the formula
$$
\pi^{\ast}\tau=\iota^{\ast}\Omega.
$$
$\tau$ is well-defined, because $\pi$ is a submersion. It is
nondegenerate, because the kernel of $\iota^{\ast}\Omega$ is
$TQ^{\Omega}\cap TQ=ker \pi_{\ast}$. From the definition of $\tau$
we obtain
\begin{equation}
\label{d-conf} \pi^{\ast}d\tau=d\iota^{\ast}\Omega=\iota^{\ast}
\omega \wedge \iota^{\ast}\Omega= \iota^{\ast} \omega \wedge
\pi^{\ast}\tau.
\end{equation}
We define $\alpha$ by the formula
$$
\pi^{\ast}\alpha=\iota^{\ast}\omega.
$$
$\alpha$ is well-defined closed $1-$form on $N$, because $\pi$ is
a submersion and $\omega$ is closed. From (\ref{d-conf}) we have
$d\tau=\alpha \wedge \tau$.\end{proof}

Notice that a generic hypersurface on $M$ does not satisfy (even
locally) assumption (\ref{war1a}).

\begin{ex}
\label{p1} Let $H$ be a smooth hypersurface on a locally conformal
symplectic manifold  $(M,\Omega,\omega)$.

By Darboux theorem germs at $q$ of $(M,\Omega,\omega)$ and $H$ are
locally diffeomorphic to germs at $0$ of $(\mathbb
R^{2n},f\sum_{i=1}^n dx_i\wedge dy_i,df)$ and $\{(x,y)\in \mathbb
R^{2n}:x_1=0\}$, where $f$ is a smooth function-germ on $\mathbb
R$ at $0$ and $\dim M=2n$.

Then the characteristic distribution $T\{(x,y)\in \mathbb
R^{2n}:x_1=0\}^{\Omega}$ is spanned by $\frac{\partial}{\partial
y_1}$. The reduced manifold can be locally  identified with
$\{(x,y)\in \mathbb R^{2n}:x_1=y_1=0\}$.

There exists a locally conformal symplectic structure $\tau$ on
the reduced manifold satisfying condition (\ref{war1}) if and only
if $\frac{\partial f}{\partial y_1}|_{\{x_1=0\}}=0$.
\end{ex}

 In the next section we
propose a procedure of reduction of locally conformal symplectic
structures and find the sufficient and necessary condition for
this reduction in terms of a cohomology class of the restriction
of $\omega$ to the coisotropic submanifold in the first cohomology
group tangent to its characteristic foliation.

\section{Reduction of locally conformal symplectic structures.}

Let $(M,\Omega,\omega)$ be a locally conformal symplectic
manifold. Let $Q$ be submanifold of $M$, let $\iota:
Q\hookrightarrow M$ denote the standard inclusion of $Q$ in $M$
and let $\mathcal F$ be the characteristic foliation of the
characteristic distribution $TQ^{\Omega}\cap TQ$ of constant
dimension smaller than $\dim Q$.

\begin{pr} \label{eq}
If $\Omega^{\prime}$ is a locally conformal symplectic form
conformally equivalent  to $\Omega$ on $Q$ then the characteristic
foliation $\mathcal F^{\prime}$ of $TQ^{\Omega^{\prime}}\cap TQ$
coincides with $\mathcal F$. If $\omega^{\prime}$ is the Lee form
of $\Omega^{\prime}$ then
$[\iota^{\ast}\omega]=[\iota^{\ast}\omega^{\prime}]$ in
$H^1({\mathcal F})$.
\end{pr}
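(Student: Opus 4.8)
The plan is to handle the two assertions separately and, since the conformal equivalence is only assumed along $Q$, to carry out the whole argument in terms of the pullbacks $\iota^\ast\Omega$ and $\iota^\ast\Omega'$ rather than of $\Omega,\Omega'$ themselves.

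For the statement about foliations, I would first record that the characteristic distribution is intrinsic to the pullback: for $v\in T_qQ$ one has $v\in (T_qQ)^\Omega\cap T_qQ$ precisely when $(\iota^\ast\Omega)|_q(v,w)=0$ for all $w\in T_qQ$, so that $(TQ)^\Omega\cap TQ=\ker\iota^\ast\Omega$. Since $\iota^\ast\Omega'=f\,\iota^\ast\Omega$ with $f>0$, the two pullbacks have the same kernel at every point; hence the characteristic distributions coincide and so do the foliations $\mathcal F'$ and $\mathcal F$.

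For the cohomological statement the first step is to relate the Lee forms on $Q$. Pulling back $d\Omega=\omega\wedge\Omega$ and $d\Omega'=\omega'\wedge\Omega'$ and using $\iota^\ast d=d\iota^\ast$ gives $d\iota^\ast\Omega=\iota^\ast\omega\wedge\iota^\ast\Omega$ and $d\iota^\ast\Omega'=\iota^\ast\omega'\wedge\iota^\ast\Omega'$. Substituting $\iota^\ast\Omega'=f\,\iota^\ast\Omega$ into the second identity, expanding $d(f\,\iota^\ast\Omega)$, and eliminating $d\iota^\ast\Omega$ with the first identity, I expect to reach
$$
\Big(d\ln f+\iota^\ast\omega-\iota^\ast\omega'\Big)\wedge\iota^\ast\Omega=0,
$$
where $\ln f$ is a genuine smooth function on $Q$ because $f>0$. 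Write $\nu=d\ln f+\iota^\ast\omega-\iota^\ast\omega'$.

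The main obstacle is that, unlike in the globally conformal case, $\iota^\ast\Omega$ is degenerate, so the displayed relation does not force $\nu=0$; this is exactly where having conformal equivalence only along $Q$ costs us. What it does give is that $\nu$ annihilates the characteristic distribution. For $v\in\ker\iota^\ast\Omega=(TQ)^\Omega\cap TQ$ I would contract the displayed equation with $v$: using $v\rfloor\iota^\ast\Omega=0$ and the antiderivation property of $\rfloor$,
$$
0=v\rfloor\big(\nu\wedge\iota^\ast\Omega\big)=\nu(v)\,\iota^\ast\Omega-\nu\wedge\big(v\rfloor\iota^\ast\Omega\big)=\nu(v)\,\iota^\ast\Omega.
$$
Since the characteristic distribution has dimension strictly smaller than $\dim Q$, the form $\iota^\ast\Omega$ is not identically zero, and therefore $\nu(v)=0$. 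Hence $\nu\in\Omega^1(Q,\mathcal F)$, i.e. $\iota^\ast\omega-\iota^\ast\omega'=-d\ln f$ in $\Omega^1(\mathcal F)$. Finally, $\iota^\ast\omega$ and $\iota^\ast\omega'$ are closed (as $\omega,\omega'$ are), so they represent classes in $H^1(\mathcal F)$, and $[\iota^\ast\omega]-[\iota^\ast\omega']=[-d_0\ln f]=0$, which yields $[\iota^\ast\omega]=[\iota^\ast\omega']$ in $H^1(\mathcal F)$.
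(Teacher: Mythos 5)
Your proposal is correct and follows essentially the same route as the paper: identify the characteristic distribution with $\ker\iota^{\ast}\Omega$ to get coincidence of the foliations, differentiate the conformal relation on $Q$ to obtain $\bigl(d\ln f+\iota^{\ast}\omega-\iota^{\ast}\omega'\bigr)\wedge\iota^{\ast}\Omega=0$, and contract with vectors tangent to $\mathcal F$ to conclude the difference of Lee forms equals $d\ln f$ modulo $\Omega^1(Q,\mathcal F)$. Your write-up is in fact slightly more careful than the paper's at two points: you justify $\iota^{\ast}\Omega|_q\neq 0$ via the standing assumption that the characteristic distribution has dimension smaller than $\dim Q$, and you note explicitly that closedness of $\omega,\omega'$ makes the cohomology classes well defined.
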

\begin{proof} $\Omega$ and $\Omega^{\prime}$  are conformally equivalent
on $Q$ then there exists a positive function $f$ on $Q$ such that
\begin{equation}
\label{conQ} \iota^{\ast}\Omega=f\iota^{\ast}\Omega^{\prime}.
\end{equation}
Thus it is obvious that ${\mathcal F}={\mathcal F^{\prime}}$,
since $TQ^{\Omega^{\prime}}\cap TQ=ker
\iota^{\ast}\Omega^{\prime}=ker \iota^{\ast}\Omega=TQ^{\Omega}\cap
TQ$. Differentiating (\ref{conQ}) we obtain
$$
\iota^{\ast}\omega\wedge\iota^{\ast}\Omega=f\iota^{\ast}\omega^{\prime}\wedge\iota^{\ast}\Omega^{\prime}
+df\wedge\iota^{\ast}\Omega^{\prime}
$$
Using (\ref{conQ}) again we have
$$
(\iota^{\ast}\omega-\iota^{\ast}\omega^{\prime}-d(\ln
f))\wedge\iota^{\ast}\Omega^{\prime}=0
$$
Let $v$ be a vector tangent to a foliation. Then
$$
v\rfloor(\iota^{\ast}\omega-\iota^{\ast}\omega^{\prime}-d(\ln
f))\wedge\iota^{\ast}\Omega^{\prime}=0.
$$
But $\iota^{\ast}\Omega^{\prime}\ne 0$ and $v\rfloor
\iota^{\ast}\Omega^{\prime}=0$, therefore
$$\iota^{\ast}\omega(v)-\iota^{\ast}\omega^{\prime}(v)-
d(\ln(f))(v)=0.$$
 It implies that
$[\iota^{\ast}\omega]=[\iota^{\ast}\omega^{\prime}]$ in
$H^1({\mathcal F})$.\end{proof}

Proposition \ref{eq} means that the cohomology class
$[\iota^{\ast}\omega]$ in $H^1({\mathcal F})$ is the invariant of
the restriction of a locally conformal symplectic structure to
$Q$. In the next theorem we use this class to state the necessary
and sufficient condition when a reduced locally conformal
symplectic structure exists on a reduced manifold.
\begin{thm}
\label{tw-coh} Let $Q$ be a submanifold of a locally conformal
symplectic manifold $(M,\Omega,\omega)$, let $\iota:
Q\hookrightarrow M$ denote the standard inclusion of $Q$ in $M$
and let $\mathcal F$ be the characteristic foliation of the
characteristic distribution $TQ^{\Omega}\cap TQ$ of constant
dimension.

If $N=Q/\mathcal F$ is a manifold of dimension greater than 2 and
the canonical projection $\pi: Q \rightarrow N=Q/{\mathcal F}$ is
a submersion then there exists a locally conformal symplectic form
$\tau$ on $N$ and a smooth positive function $f$ on $Q$ such that
\begin{equation}
\label{war2} \pi^{\ast}\tau=f\iota^{\ast}\Omega
\end{equation} if
and only if $[\iota^{\ast}\omega]=0 \in H^1({\mathcal F})$
\end{thm}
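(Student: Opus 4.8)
The plan is to prove both implications through the same mechanism that drives Proposition \ref{tw-ex}, namely the criterion that $f\iota^{\ast}\Omega$ descends to a form $\tau$ on $N=Q/\mathcal F$ precisely when it is \emph{basic} with respect to $\pi$, i.e.\ horizontal and $\mathcal F$-invariant. Horizontality holds for any conformal factor, since $\ker\iota^{\ast}\Omega=TQ^{\Omega}\cap TQ=\ker\pi_{\ast}$; the whole content sits in the invariance $L_X(f\iota^{\ast}\Omega)=0$ along leaf-tangent $X$, and this is exactly what $[\iota^{\ast}\omega]=0$ in $H^1(\mathcal F)$ supplies. I expect both directions to revolve around the pointwise identity
$$\bigl(\iota^{\ast}\omega+d(\ln f)\bigr)(X)=0 \quad\text{for every leaf-tangent } X,$$
which says precisely that $\iota^{\ast}\omega+d(\ln f)\in\Omega^1(Q,\mathcal F)$, i.e.\ that $[\iota^{\ast}\omega]=-d_0[\ln f]=0$ in $H^1(\mathcal F)$.

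For necessity, I would suppose $\pi^{\ast}\tau=f\iota^{\ast}\Omega$ with $\tau$ locally conformal symplectic, say $d\tau=\alpha\wedge\tau$ with $\alpha$ closed. Differentiating and using $d\iota^{\ast}\Omega=\iota^{\ast}\omega\wedge\iota^{\ast}\Omega$ together with $df=f\,d(\ln f)$ gives
$$\bigl(\pi^{\ast}\alpha-\iota^{\ast}\omega-d(\ln f)\bigr)\wedge\iota^{\ast}\Omega=0.$$
Contracting with a leaf-tangent $v$ and using $v\rfloor\iota^{\ast}\Omega=0$ and $\iota^{\ast}\Omega\neq0$ kills the wedge and leaves $\bigl(\pi^{\ast}\alpha-\iota^{\ast}\omega-d(\ln f)\bigr)(v)=0$; since $\pi_{\ast}v=0$ makes $\pi^{\ast}\alpha(v)=0$, this is exactly the identity above. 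Hence $\iota^{\ast}\omega+d(\ln f)\in\Omega^1(Q,\mathcal F)$ and $[\iota^{\ast}\omega]=0$ in $H^1(\mathcal F)$, the class being defined since $\omega$ closed forces $d\iota^{\ast}\omega=0$.

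For sufficiency, I would read $[\iota^{\ast}\omega]=0$ as the existence of $g\in\Omega^0(Q)$ with $\iota^{\ast}\omega-dg\in\Omega^1(Q,\mathcal F)$, set $f=e^{-g}>0$, and record that $(\iota^{\ast}\omega+d(\ln f))(X)=0$ on leaf-tangent $X$. Then I would verify $f\iota^{\ast}\Omega$ is basic: horizontality is clear, and Cartan's formula gives $L_X(f\iota^{\ast}\Omega)=X\rfloor d(f\iota^{\ast}\Omega)$ because $X\rfloor f\iota^{\ast}\Omega=0$; computing $d(f\iota^{\ast}\Omega)=f\,(d(\ln f)+\iota^{\ast}\omega)\wedge\iota^{\ast}\Omega$ and contracting with $X$ leaves only $f\,(d(\ln f)+\iota^{\ast}\omega)(X)\,\iota^{\ast}\Omega$, which vanishes by the identity. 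Thus $f\iota^{\ast}\Omega$ descends to a well-defined $\tau$ on $N$ ($\pi$ being a submersion is used as in Proposition \ref{tw-ex}), and $\tau$ is nondegenerate because $\ker(f\iota^{\ast}\Omega)=\ker\iota^{\ast}\Omega=\ker\pi_{\ast}$. Finally, setting $\beta:=\iota^{\ast}\omega+d(\ln f)$, I would note $\pi^{\ast}d\tau=\beta\wedge\pi^{\ast}\tau$, observe that $\beta$ is closed ($\omega$ closed) and horizontal (the identity), hence invariant and basic, so $\beta=\pi^{\ast}\alpha$ for a closed $1$-form $\alpha$ on $N$; injectivity of $\pi^{\ast}$ then yields $d\tau=\alpha\wedge\tau$, exhibiting $\tau$ as locally conformal symplectic.

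The step I expect to be the real crux is the invariance computation $L_X(f\iota^{\ast}\Omega)=0$, since this is the only place where the cohomological hypothesis is genuinely consumed: without a leafwise primitive of $\iota^{\ast}\omega$ no conformal factor can absorb the leaf-tangent part of the Lee form, and $f\iota^{\ast}\Omega$ fails to be basic — this is precisely the mechanism behind the obstruction in Example \ref{p1}. I would also flag that the hypothesis $\dim N>2$ is needed at the very end, to guarantee that the relation $d\tau=\alpha\wedge\tau$ with $\alpha$ closed truly characterizes $\tau$ as locally conformal symplectic, the map $\alpha\mapsto\alpha\wedge\tau$ on $1$-forms being injective only when $\dim N\geq 4$.
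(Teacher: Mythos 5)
Your proposal is correct and follows essentially the same route as the paper: both directions hinge on the identity $(\iota^{\ast}\omega+d(\ln f))(v)=0$ for leaf-tangent $v$, the conformal factor $f=e^{-g}$ built from a leafwise primitive, the Cartan-formula invariance computation ${\mathcal L}_X(f\iota^{\ast}\Omega)=0$, and descent of both $f\iota^{\ast}\Omega$ and $\iota^{\ast}\omega+d(\ln f)$ through the submersion $\pi$. The only cosmetic difference is that in the necessity direction you invoke $d\tau=\alpha\wedge\tau$ and $\pi^{\ast}\alpha(v)=0$, where the paper simply uses $v\rfloor\pi^{\ast}d\tau=0$ for vertical $v$; the two observations are interchangeable.
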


\begin{proof} Assume that there exists a locally conformal symplectic
form $\tau$ on $N$ and a positive smooth function $f$ on $Q$
 such that
 $\pi^{\ast}\tau= f \iota^{\ast}\Omega$. Then
$$
\pi^{\ast}d\tau= df\wedge \iota^{\ast}\Omega+
f\iota^{\ast}d\Omega= f(d(\ln(f))+\iota^{\ast}\omega)\wedge
\iota^{\ast}\Omega.
$$
Therefore, for any $q\in Q$ and any vector $v$ tangent to
${\mathcal F}_q$ we have
$$
v\rfloor (\iota^{\ast} \omega+d(\ln(f))) \wedge
\iota^{\ast}\Omega)= v\rfloor (\frac{\pi^{\ast}d\tau}{f})=0,
$$
since $\pi_{\ast}(v)=0$. But $\iota^{\ast}\Omega\ne 0$ and
$v\rfloor \iota^{\ast}\Omega=0$, therefore
$$\iota^{\ast}\omega(v)+ d(\ln(f))(v)=0$$ for any $v$ tangent to
${\mathcal F}_q$. It implies that $[\iota^{\ast}\omega]=0\in
H^1({\mathcal F})$.

Now assume that $[\iota^{\ast}\omega]=0\in H^1({\mathcal F})$.
Then there exists a function $g$ on $Q$ such that
$\iota^{\ast}\omega(v)=dg(v)$ for any $q\in Q$ and any vector $v$
tangent to ${\mathcal F}_q$. Thus
$$
 v\rfloor d(\exp(-g)\iota^{\ast}\Omega)=
 $$
 $$
 \exp(-g)(-dg(v)+\iota^{\ast} \omega(v)) \iota^{\ast}\Omega -
 \exp(-g)(-dg+\iota^{\ast} \omega) \wedge v \rfloor \iota^{\ast}\Omega=0.
$$
Hence
$$
{\mathcal L}_X\exp(-g)\iota^{\ast}\Omega=X\rfloor
d(\exp(-g)\iota^{\ast}\Omega)+d(X\rfloor
\exp(-g)\iota^{\ast}\Omega)=0
$$
for every smooth section $X$ of $TQ^{\Omega}\cap TQ$. Therefore
$\exp(-g)\iota^{\ast}\Omega$ is constant on every leaf of the
characteristic foliation ${\mathcal F}$. Now we define the form
$\tau$ by the formula
$$
\pi^{\ast}\tau=\exp(-g)\iota^{\ast}\Omega.
$$
$\tau$ is well-defined, because $\pi$ is a submersion. It is
nondegenerate, because the kernel of $\exp(-g)\iota^{\ast}\Omega$
is $TQ^{\Omega}\cap TQ=ker \pi_{\ast}$. From the definition of
$\tau$ we obtain
\begin{equation}
\label{d-conf1}
\pi^{\ast}d\tau=d(exp(-g)\iota^{\ast}\Omega)=(\iota^{\ast}
\omega-dg) \wedge \exp(-g)\iota^{\ast}\Omega= (\iota^{\ast}
\omega-dg) \wedge \pi^{\ast}\tau.
\end{equation}
We define $\alpha$ by the formula
$$
\pi^{\ast}\alpha=\iota^{\ast}\omega-dg.
$$
$\alpha$ is well-defined closed $1-$form on $N$, because $\pi$ is
a submersion and $\omega$ is closed. From (\ref{d-conf1}) we have
$d\tau=\alpha \wedge \tau$.\end{proof}

By Proposition \ref{eq} and Theorem \ref{tw-coh} it is easy to see
that the reduction does not depend of the choice of a locally
conformal symplectic form from the conformal equivalence class.
Two locally conformal symplectic forms conformally equivalent on
$Q$ are reduced to the same locally conformal symplectic structure
on a reduced space. Thus Theorem \ref{tw-coh} gives a procedure of
reduction of locally conformal symplectic structures.

Now we show how this procedure of reduction works. Any germ of a
coisotropic submanifold can be reduced using the above procedure,
since a locally conformal symplectic manifold is locally
equivalent to a symplectic manifold. The obstruction to existence
of the locally conformal structure on the reduced manifold is only
global.

\begin{cor}
Let $Q$ be the germ at $q$ of a coisotropic submanifold of a
locally conformal symplectic manifold $(M,\Omega,\omega)$, let
$\iota: Q\hookrightarrow M$ denote the germ of the inclusion of
$Q$ in $M$ and let $\mathcal F$ be the characteristic foliation of
$TQ^{\Omega}\cap TQ=TQ^{\Omega}$.

 Then there exists a
germ locally conformal symplectic form $\tau$ on the germ of the
reduced manifold $N=Q/\mathcal F$ and a germ of a smooth positive
function $f$ on $Q$ such that $\pi^{\ast}\tau=
f\iota^{\ast}\Omega$ where $\pi: Q \rightarrow N=Q/{\mathcal F}$
is the germ of the canonical projection.
\end{cor}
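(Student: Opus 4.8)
The plan is to deduce the corollary from Theorem \ref{tw-coh} by showing that its single cohomological hypothesis is automatically satisfied in the germ setting. First I would record the structural data: coisotropy of $Q$ means $TQ^{\Omega}\subset TQ$, so that $TQ^{\Omega}\cap TQ=TQ^{\Omega}$, and by Proposition \ref{invol} together with Frobenius' theorem this distribution is integrable and defines the characteristic foliation $\mathcal F$. As a germ, the quotient $N=Q/\mathcal F$ with the canonical projection $\pi:Q\to N$ is a manifold germ with $\pi$ a submersion, so all the structural hypotheses of Theorem \ref{tw-coh} are in place and it remains only to check that $[\iota^{\ast}\omega]=0\in H^1(\mathcal F)$.

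The key step is to exploit that everything is taken as a germ at $q$. I would restrict to a contractible neighbourhood $U$ of $q$ in $M$; since $\omega$ is closed, the Poincar\'e lemma supplies a smooth function $\sigma$ on $U$ with $\omega|_U=d\sigma$. Then $\iota^{\ast}\omega=\iota^{\ast}d\sigma=d(\iota^{\ast}\sigma)$ is exact on $Q$, so its class in $H^1(\mathcal F)$ lies in the image of $d_0$ and therefore vanishes. (Equivalently, one may set $\Omega'=\exp(-\sigma)\Omega$, which is conformally equivalent to $\Omega$ and has vanishing Lee form, hence is genuinely symplectic; Proposition \ref{eq} then identifies $[\iota^{\ast}\omega]$ with $[\iota^{\ast}\omega']=0$.) This is exactly the concrete manifestation of the remark that a locally conformal symplectic manifold is locally conformally equivalent to a symplectic one, and it makes the sole global obstruction disappear.

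Finally I would invoke Theorem \ref{tw-coh}: since $[\iota^{\ast}\omega]=0\in H^1(\mathcal F)$, the theorem yields a germ of a locally conformal symplectic form $\tau$ on $N$ and a germ of a positive smooth function $f$ on $Q$ with $\pi^{\ast}\tau=f\,\iota^{\ast}\Omega$, which is the assertion. The only delicate point I foresee concerns the hypotheses of Theorem \ref{tw-coh} that refer to $N$ rather than to the cohomology class, namely that $N$ is a manifold germ of dimension greater than $2$ and that $\pi$ is a submersion; I would therefore take $U$ small enough that $TQ^{\Omega}$ has constant rank smaller than $\dim Q$ and the local quotient is a smooth manifold germ with these properties. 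Granting this, no further computation is needed, since the entire content of the corollary is carried by the local exactness of the Lee form $\omega$.
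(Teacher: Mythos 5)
Your proof is correct and follows essentially the same route as the paper: the paper justifies the corollary by the remark that a locally conformal symplectic manifold is locally conformally equivalent to a symplectic one (i.e.\ the closed Lee form $\omega$ is locally exact), so that $[\iota^{\ast}\omega]=0\in H^1(\mathcal F)$ for a germ and Theorem \ref{tw-coh} applies, which is precisely your argument via the Poincar\'e lemma (or equivalently via $\Omega'=\exp(-\sigma)\Omega$ and Proposition \ref{eq}). Your added care about constant rank is harmless but unnecessary, since for a coisotropic germ the characteristic distribution $TQ^{\Omega}$ automatically has constant dimension equal to the codimension of $Q$.
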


\end{document}